\newtheorem{theorem}{Theorem}[section]
\newtheorem{corollary}[theorem]{Corollary}
\newtheorem{definition}[theorem]{Definition}
\newtheorem{proposition}[theorem]{Proposition}
\begin{document}

\title{Einstein-Weyl structures on almost cosymplectic manifolds}

\author{\\  Xiaomin Chen
\thanks{
The author is supported by the Science Foundation of China
University of Petroleum-Beijing(No.2462015YQ0604) and partially supported
by  the Personnel Training and Academic
Development Fund (No.2462015QZDX02).
 }\\
{\normalsize College of  Science, China University of Petroleum (Beijing),}\\
{\normalsize Beijing, 102249, China}\\
{\normalsize xmchen@cup.edu.cn}}
\maketitle \vspace{-0.1in}


\abstract{In this article, we study Einstein-Weyl structures on almost cosymplectic manifolds.
First we prove that an almost cosymplectic $(\kappa,\mu)$-manifold is Einstein or cosymplectic if it admits a closed Einstein-Weyl structure or two Einstein-Weyl structures. Next for a three dimensional compact almost $\alpha$-cosymplectic manifold admitting closed Einstein-Weyl structures, we prove that it is Ricc-flat. Further, we show that an almost $\alpha$-cosymplectic admitting two Einstein-Weyl structures is either Einstein or $\alpha$-cosymplectic, provided that its Ricci tensor is commuting. Finally, we prove that a compact $K$-cosymplectic manifold with a closed Einstein-Weyl structure or two special Einstein-Weyl structures is cosymplectic.}
 \vspace{-0.1in}
\medskip\vspace{12mm}

\noindent{\it Keywords}:  Einstein-Weyl structures; almost cosymplectic $(\kappa,\mu)$-manifolds; almost $\alpha$-cosymplectic manifolds; cosymplectic manifolds; Einstein manifolds.
  \vspace{2mm}

\noindent{\it MSC}: 53D10; 53D15 \vspace{2mm}

\section{Introduction}

A Weyl structure $W=(D,[g])$ on a smooth manifold $M$ is a torsion free affine connection $D$ preserving a conformal structure $[g]$. Namely there exists a unique 1-form $\theta$ such that $Dg=-2\theta\otimes g$. The concept of Weyl structure goes back to the work of H. Weyl. He introduced the definition to unify gravitational fields and electromagnetic fields(see \cite{W}). Later on N. Hitchin (\cite{H1}) in studying 3-dimensional minitwistor theory observed that the minitwistor theory can be generalized over a 3-manifold endowed with a Weyl structure satisfying a certain Ricci tensor
condition, called an Einstein-Weyl structure. Refer also to \cite{H2}. A Weyl structure $W=(D, [g])$ is Einstein-Weyl if the symmetrized Ricci tensor is proportional to a metric $g$ representing $[g]$:
\begin{equation}\label{2.4}
  Ric^D(Y,X)+Ric^D(X,Y)=\Lambda g(Y,X),\quad\Lambda\in C^\infty(M).
\end{equation}
Further, if the unique 1-form $\theta$ is closed, then $W$ is said to be a closed Einstein-Weyl structure.  The Einstein-Weyl condition plays a key role in physics, the pure Einstein theory being too strong as a system model for various physical questions. On the contrary, Einstein-Weyl structures appear naturally as the background of the static Yang-Mills-Higgs theory.

On the other hand, almost contact geometry also provides a natural underlying structure
to analyse many problems in physics. For example, Sasakian-Einstein and 3-Sasakian geometry have emerged in the context of dualities of
certain supersymmetric conformal field theories \cite{BG2}, and general almost contact structures have also been used to study special
magnetic fields \cite{U}. Meanwhile,  Matzeu proved that several classes of almost contact manifolds also naturally carry Einstein-Weyl
structures \cite{MA}. Therefore, Einstein-Weyl structures have received a lot of attention in the
frame work of almost contact metric manifold (see \cite{GH,MA2,MA3,N}).

Notice that an Einstein-Weyl structure is a generalization of Einstein metric in terms of affine connection. Recall the Goldberg conjecture \cite{G} that a compact Einstein almost K\"ahler manifold is K\"ahler. The conjecture is true if
  the scalar curvature is non-negative (\cite{S}). As is well known, contact metric manifold can be considered as an odd-dimensional analogue to almost
K\"ahlar manifold. Boyer and Galicki \cite{BG} proposed an odd-dimensional Goldberg conjecture that a compact Einstein $K$-contact manifold is Sasakian and proved that it is true. As a generalization, Ghosh \cite{GH2} and Gauduchon-Moroianu \cite{GM} simultaneously showed that
a compact closed Einstein-Weyl $K$-contact manifold is also Sasakian using different method.

We also remark that another class of almost contact manifold, called \emph{almost cosymplectic manifold}, was also paid many attentions (see a survey \cite{MNY}). The concept was first defined by Goldberg and Yano \cite{GY} as an almost contact manifold whose 1-form $\eta$ and fundamental 2-form $\omega$ are closed.  An almost cosymplectic manifold is said to be \emph{cosymplectic} if in addition the almost contact structure is normal (notice that here we adopt  "cosymplectic" to represent "coK\"ahler" in \cite{MNY}). Concerning cosymplectic geometry, we mention the result that locally conformal cosymplectic manifolds admit a naturally defined
conformally invariant Weyl structure (\cite{MA2}). Later on, Matzeu proved that every
$(2n+1)$-dimensional cosymplectic manifold of constant $\phi$-sectional curvature $c>0$ admits two Ricci-flat Weyl structures where the 1-forms associated to
the metric $g\in[g]$ are $\pm\theta=\pm\lambda\eta$, where $\lambda=\frac{2c}{2n-1}$. More recently, she generalized this result by proving that if a compact cosymplectic manifold $(M,\phi,\xi,\eta,g)$ admits a closed Einstein-Weyl structure $D$, then $M$ is necessarily $\eta$-Einstein (\cite{MA3}).

Recently, Bazzoni-Goertsches \cite{BG} defined a \emph{$K$-cosymplectic manifold}, namely an almost cosymplectic manifold whose Reeb vector field is Killing. In \cite{CP}, in fact it is proved that every compact Einstein $K$-cosymplectic manifold is necessarily cosymplectic.
In addition, Endo \cite{E} defined the notion of \emph{almost cosymplectic $(\kappa,\mu)$-manifold}, i.e. the
curvature tensor of an almost cosymplectic manifold satisfies
\begin{equation}\label{4.1}
  R(X,Y)\xi=\kappa(\eta(Y)X-\eta(X)Y)+\mu(\eta(Y)hX-\eta(X)hY)
\end{equation}
for any vector fields $X,Y$, where $\kappa,\mu$ are constant and $h=\frac{1}{2}\mathcal{L}_\xi\phi$.  As the extension of almost cosymplectic manifold,  Kenmotsu \cite{K} defined the {\it almost Kenmotsu manifold}, which is an almost contact manifold satisfying $d\eta=0$ and $d\omega=2\eta\wedge\omega$.
    Based on this Kim and Pak \cite{KP} introduced the concept of \emph{almost $\alpha$-cosymplectic manifold}, i.e. an almost contact manifold satisfying
$d\eta=0$ and $d\omega=2\alpha\eta\wedge\omega$ for some real number $\alpha$.

Motivated by the above background, in the present paper we first study an almost cosymplectic $(\kappa,\mu)$-manifold and an almost $\alpha$-cosymplectic manifold admitting Einstein-Weyl structures. Finally, we consider a compact $K$-cosymplectic manifold admitting a closed Einstein-Weyl structure. In order to prove our results, we need to recall some definitions and related conclusions on almost cosymplectic manifolds as well as Weyl structures, which are presented in Section 2 and Section 3, respectively. Starting from Section 4, we will state our results and give their proofs.

\section{Almost cosymplectic manifolds}
 Let $M^{2n+1}$ be a $(2n+1)$-dimensional smooth manifold.
An \emph{almost contact structure} on $M$ is a triple $(\phi,\xi,\eta)$, where $\phi$ is a
$(1,1)$-tensor field, $\xi$ a unit vector field, called Reeb vector field, $\eta$ a one-form dual to $\xi$ satisfying
$\phi^2=-I+\eta\otimes\xi,\,\eta\circ \phi=0,\,\phi\circ\xi=0.$
A smooth manifold with such a structure is called an \emph{almost contact manifold}.

A Riemannian metric $g$ on $M$ is called compatible with the almost contact structure if
\begin{equation*}
g(\phi X,\phi Y)=g(X,Y)-\eta(X)\eta(Y),\quad g(X,\xi)=\eta(X)
\end{equation*}
for any $X,Y\in\mathfrak{X}(M)$. An almost contact structure together with a compatible metric
is called an \emph{almost contact metric structure} and $(M,\phi,\xi,\eta,g)$ is called an almost contact metric manifold. An almost contact structure $(\phi,\xi,\eta)$ is said
to be \emph{normal} if the corresponding complex structure $J$ on $M\times\mathbb{R}$ is integrable.

Denote by $\omega$ the fundamental 2-form on $M$ defined by $\omega(X,Y):=g(\phi X,Y)$ for all $X,Y\in\mathfrak{X}(M)$.
An {\it almost $\alpha$-cosymplectic manifold} (\cite{KP,OAM}) is an almost contact metric manifold $(M,\phi,\xi,\eta,g)$ such that the fundamental form $\omega$ and 1-form $\eta$ satisfy $d\eta=0$ and $d\omega=2\alpha\eta\wedge\omega,$ where $\alpha$ is a real number. A normal almost $\alpha$-cosymplectic manifold is called \emph{$\alpha$-cosymplectic manifold}.  $M$ is an {\it almost cosymplectic manifold} if $\alpha=0$ and an {\it almost Kenmotsu manifold} if $\alpha=1$.

Let $M$ be an almost $\alpha$-cosymplectic manifold, we recall that there is an operator
$h=\frac{1}{2}\mathcal{L}_\xi\phi$ which is a self-dual operator.  The Levi-Civita connection
is given by (see \cite{OAM})
\begin{equation}\label{2.4*}
  2g((\nabla_X\phi)Y,Z)=2\alpha g(g(\phi X,Y)\xi-\eta(Y)\phi X,Z)+g(N(Y,Z),\phi X)
\end{equation}
for arbitrary vector fields $X,Y$, where $N$ is the Nijenhuis torsion of $M$.  Then by a simple calculation, we have
\begin{equation}\label{2.2*}
\mathrm{trace}(h)=0,\quad h\xi=0,\quad\phi h=-h\phi,\quad g(hX,Y)=g(X,hY),\quad\forall X,Y\in\mathfrak{X}(M).
\end{equation}

 Using \eqref{2.4*}, a straightforward calculation gives
\begin{equation}\label{2.5}
\nabla_X\xi= -\alpha\phi^2X-\phi hX
\end{equation}
and $\nabla_\xi\phi=0$. Denote by $R$ and $\mathrm{Ric}$ the Riemannian curvature tensor and Ricci tensor, respectively. For an almost $\alpha$-cosymplectic manifold $(M^{2n+1},\phi,\xi,\eta,g)$ the following equations were proved(\cite{OAM}):
\begin{align}
&R(X,\xi)\xi-\phi R(\phi X,\xi)\xi=2[\alpha^2\phi^2X-h^2X]\label{2.6},\\
&(\nabla_\xi h)X= -\phi R(X,\xi)\xi-\alpha^2\phi X-2\alpha hX-\phi h^2X,\label{2.7} \\
&Ric(\xi,\xi)=-2n\alpha^2-\mathrm{trace}(h^2),\label{2.8}\\
&\mathrm{trace}(\phi h)=0,\label{2.9}\\
 &R(X,\xi)\xi=\alpha^2\phi^2X+2\alpha\phi hX-h^2X+\phi(\nabla_\xi h)X\label{2.10}
 \end{align}
for any vector fields $X,Y$ on $M$.

\section{Weyl structures}

Now suppose that $(M,c)$ is a conformal manifold with conformal class $c$. A Weyl connection $D$ in $(M,c)$ is a  torsion-free linear connection which preserves the conformal class $c$. For any metric $g$ in $c$ it carries a 1-form $\theta$, called thee Lee form with respect to $g$, such that $Dg=-2\theta\otimes g$.
It is related to the Levi-Civita connection $\nabla$ by the following relation:
\begin{equation}\label{2.1}
  D_XY=\nabla_XY+\theta(X)Y+\theta(Y)X-g(X,Y)B
\end{equation}
for any vector fields $X,Y$, where $B$ is dual to $\theta$ with respect to $g$. A Weyl structure $W=(D,[g])$ is said to be {\it closed}, resp. {\it exact} if its Lee form is closed, resp. exact with respect to any metric in $c$.

By \eqref{2.1}, a straightforward computation implies the curvature tensor and Ricci tensor of the Weyl connection $D$ are as follows:
\begin{align}
  R^D(X,Y)Z=&R(X,Y)Z+\Sigma_g(X,Y)Z-\Sigma_g(Y,X)Z,\label{2.2}\\
  Ric^D(Y,Z)=&Ric(Y,Z)-2n(\nabla_Z\theta)(Y)+(\nabla_Y\theta)(Z)\label{2.3}\\
  &+(2n-1)\theta(Z)\theta(Y)+(\delta\theta-(2n-1)|\theta|^2)g(Y,Z),\nonumber
\end{align}
where
\begin{align*}
  \Sigma_g(X,Y)Z = &(\nabla_X\theta)(Y)Z+(\nabla_X\theta)(Z)Y-g(Y,Z)\nabla_XB\\
&-g(Y,Z)|\theta|^2X-g(X,Z)\theta(Y )B + \theta(Y)\theta(Z)X
\end{align*}
for $X,Y,Z\in\mathfrak{X}(M)$ and $\delta\theta$ denotes the codifferential of $\theta$ with respect to $g$.

Moreover, the following characterization of closed Weyl connection was proved.
\begin{proposition}[\cite{MA3}] \label{P1}
 Let $(M,\xi,\eta,\phi,g)$ be a $(2n+1)$-dimensional almost contact manifold. Then the Weyl structure $W=(D,[g])$ is closed if and only if $\eta(R^D(X,Y )\xi)=0$ for all vector fields $X,Y\in\mathfrak{X}(M)$.
\end{proposition}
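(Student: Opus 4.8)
The plan is to reduce the whole statement to a single pointwise identity, namely
\[
\eta(R^D(X,Y)\xi) = d\theta(X,Y)
\]
for all vector fields $X,Y$. Once this is established the proposition is immediate: the left-hand side vanishes identically if and only if $d\theta=0$, i.e. if and only if the Lee form $\theta$ is closed, which is exactly the definition of a closed Weyl structure. So everything comes down to proving this formula.

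To get the identity I would exploit that $D$ is metric-recurrent, $Dg=-2\theta\otimes g$, which is just a restatement of \eqref{2.1}. Treating $Dg$ as the $(0,3)$-tensor $-2\theta\otimes g$ and covariantly differentiating once more, the torsion-freeness of $D$ forces the symmetric, Hessian-type terms to cancel upon antisymmetrization in $X,Y$, leaving
\[
(R^D(X,Y)\cdot g)(Z,W) = -2\,d\theta(X,Y)\,g(Z,W),
\]
where $R^D(X,Y)$ acts as a derivation on the $(0,2)$-tensor $g$, so that the left side also equals $-g(R^D(X,Y)Z,W)-g(Z,R^D(X,Y)W)$. Specializing to $Z=W=\xi$ and using $g(\xi,\xi)=1$ together with $\eta(\,\cdot\,)=g(\,\cdot\,,\xi)$ collapses this to $-2\,g(R^D(X,Y)\xi,\xi)=-2\,d\theta(X,Y)$, which is precisely the claimed identity.

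As a safeguard I would cross-check the identity by a direct computation from the explicit curvature formula \eqref{2.2}. Setting $Z=\xi$ and pairing with $\xi$, the purely Riemannian term $g(R(X,Y)\xi,\xi)$ drops out by the antisymmetry of $R$ in its last two slots, so only the $\Sigma_g$-contributions survive. The bulk of the work is to see that the non-$d\theta$ pieces of $g(\Sigma_g(X,Y)\xi,\xi)-g(\Sigma_g(Y,X)\xi,\xi)$ cancel: the two $|\theta|^2$ terms and the two $\theta(\xi)$ terms cancel by antisymmetry in $(X,Y)$, while the combination $(\nabla_X\theta)(\xi)-g(\nabla_X B,\xi)$ vanishes because $B$ is the $g$-dual of $\theta$, so $g(B,\xi)=\theta(\xi)$. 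What remains is exactly $(\nabla_X\theta)(Y)-(\nabla_Y\theta)(X)=d\theta(X,Y)$, in agreement with the tensorial argument.

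The hard part will be the bookkeeping in this second route, keeping careful track of which terms are symmetric versus antisymmetric in the pair $(X,Y)$ and using the duality $g(B,\,\cdot\,)=\theta(\,\cdot\,)$ to eliminate the $\nabla_X B$ contribution. The first, tensorial argument avoids almost all of this, and is the one I would present in the body of the proof, reserving the explicit computation from \eqref{2.2} as an independent verification.
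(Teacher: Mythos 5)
Your argument is correct, but there is nothing in the paper to compare it with: Proposition \ref{P1} is stated with a citation to \cite{MA3} and no proof is given in this paper, so yours is a genuine reconstruction rather than a variant of an in-paper argument. Both of your routes are sound. The tensorial one is the cleanest: differentiating $Dg=-2\theta\otimes g$ once more and antisymmetrizing, the Ricci identity for the torsion-free connection $D$ gives $(R^D(X,Y)\cdot g)(Z,W)=-2\,d\theta(X,Y)\,g(Z,W)$ (the $\theta\otimes\theta$ terms drop out by symmetry, and torsion-freeness is what identifies the antisymmetrized covariant derivative of $\theta$ with $d\theta$), and evaluating at $Z=W=\xi$ with $g(\xi,\xi)=1$ and $\eta(\cdot)=g(\cdot,\xi)$ yields $\eta(R^D(X,Y)\xi)=d\theta(X,Y)$, from which the equivalence is immediate. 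Your cross-check via \eqref{2.2} also goes through exactly as you describe: $g(R(X,Y)\xi,\xi)=0$ by skew-symmetry of the Riemannian curvature, the $|\theta|^2$ and $\theta(\xi)$ terms cancel under the antisymmetrization in $(X,Y)$, and $(\nabla_X\theta)(\xi)=\eta(\nabla_XB)$ kills the $\nabla B$ contribution, leaving $(\nabla_X\theta)(Y)-(\nabla_Y\theta)(X)=d\theta(X,Y)$. It is worth noting that this second computation is precisely the one the author performs inside the proof of Proposition \ref{P3}, where the condition $\eta(R^D(X,Y)\xi)=0$ is expanded through \eqref{2.2} using the same identity $(\nabla_X\theta)(\xi)=\eta(\nabla_XB)$; so your ``safeguard'' route is the one actually echoed in the paper. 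One sentence you should add to be fully rigorous against the paper's definitions: closedness of $W$ is defined as closedness of the Lee form of \emph{every} metric in $[g]$, while your identity concerns the Lee form of the fixed compatible metric $g$; this costs nothing, since a conformal change alters $\theta$ by an exact form and hence $d\theta$ is an invariant of $W$ (the Faraday form), but the remark is needed for the ``if and only if'' to match the stated definition verbatim.
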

It is well-known that for an almost contact manifold $M$ its tangent bundle $TM$ can be decomposed as $TM=\mathbb{R}\xi\oplus\mathcal{D}$, where $\mathcal{D}=\{X\in TM:\eta(X)=0\}.$ Applying Proposition \ref{P1}, we immediately obtain the following result.
\begin{proposition}\label{P3}
 Let $(M,\xi,\eta,\phi,g)$ be a $(2n+1)$-dimensional almost contact manifold. If the Weyl structure $W=(D,[g])$ is closed, then either $B\in\mathbb{R}\xi$ or $B\in\mathcal{D}$.
\end{proposition}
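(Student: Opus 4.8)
The plan is to read off the structure of $B$ directly from the closedness criterion of Proposition~\ref{P1}. Since $W=(D,[g])$ is closed, that proposition gives $\eta(R^D(X,Y)\xi)=0$ for all $X,Y\in\mathfrak{X}(M)$, so I would expand this scalar identity by means of the explicit curvature formula \eqref{2.2} together with the expression for $\Sigma_g$, and then insert the orthogonal splitting $B=\eta(B)\,\xi+B^{\top}$ induced by $TM=\mathbb{R}\xi\oplus\mathcal{D}$, where $B^{\top}\in\mathcal{D}$.

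First I would set $Z=\xi$ in the formula for $\Sigma_g(X,Y)Z$ and contract the resulting vector against $\eta$. Two simplifications organize the computation: the Riemannian term drops out because $\eta(R(X,Y)\xi)=g(R(X,Y)\xi,\xi)=0$ by the skew-symmetry of $R$ in its last two slots; and the terms carrying $\nabla_X B$ pair off with the $(\nabla_X\theta)(\xi)$ terms through the identity $(\nabla_X\theta)(\xi)=g(\xi,\nabla_X B)$, obtained by differentiating $\theta(\xi)=g(B,\xi)$ and using $\theta=g(B,\cdot)$. I would then feed the various test vectors adapted to the splitting — $X,Y\in\mathcal{D}$ on the one hand, and $X\in\mathcal{D}$, $Y=\xi$ on the other — so as to isolate the coefficient of $B^{\top}$ and reduce the whole family $\eta(R^D(X,Y)\xi)=0$ to a single pointwise relation of product type, namely $\eta(B)\,B^{\top}=0$.

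Granting that relation, the alternative is immediate pointwise: at each $p\in M$ either $\eta(B)(p)=0$, whence $B(p)\in\mathcal{D}_p$, or $B^{\top}(p)=0$, whence $B(p)\in\mathbb{R}\xi_p$. To reach the stated global dichotomy I would then use connectedness of $M$: the sets $\{\eta(B)=0\}$ and $\{B^{\top}=0\}$ are closed and cover $M$, and a continuity argument (working, if necessary, on the open dense locus where one component is nonvanishing) selects a single case over the whole manifold.

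The step I expect to be the main obstacle is the reduction in the second paragraph. The contracted identity $\eta(R^D(X,Y)\xi)=0$ is, after the two cancellations above, closely tied to the condition $d\theta=0$ that already defines closedness; the delicate point is therefore to organize the remaining $\theta$-built terms so that they are seen to be inert for the alternative and the surviving contribution is genuinely the product $\eta(B)\,B^{\top}$, rather than a mere restatement of $d\theta=0$. Handling this cleanly, and then passing from the pointwise dichotomy to a global one on the connected $M$, are the two points that will require the most care.
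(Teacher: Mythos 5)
Your plan reproduces the paper's own proof almost verbatim: invoke Proposition~\ref{P1}, expand $\eta(R^D(X,Y)\xi)$ from \eqref{2.2} with $Z=\xi$, kill the Riemannian term by skew-symmetry, pair $(\nabla_X\theta)(\xi)$ with $\eta(\nabla_XB)$, and reduce to a product-type relation $\eta(B)\,B^{\top}=0$. But the step you yourself single out as the main obstacle is exactly where the argument collapses, and it cannot be completed: carrying out the contraction honestly, the contribution $\theta(Y)\theta(\xi)\eta(X)-\theta(X)\theta(\xi)\eta(Y)$, coming from the summand $\theta(Y)\theta(Z)X$ of $\Sigma_g$, cancels identically against the contribution $-\eta(X)\theta(Y)\eta(B)+\eta(Y)\theta(X)\eta(B)$ coming from $-g(X,Z)\theta(Y)B$, because $\theta(\xi)=\eta(B)$; the $|\theta|^2$ terms cancel; and the $\nabla B$ terms cancel against the $(\nabla\theta)(\xi)$ terms via $(\nabla_X\theta)(\xi)=\eta(\nabla_XB)$. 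What survives is $\eta(R^D(X,Y)\xi)=(\nabla_X\theta)(Y)-(\nabla_Y\theta)(X)=d\theta(X,Y)$ --- which is precisely why Proposition~\ref{P1} is an equivalence --- so for a closed structure the criterion is vacuous and yields no constraint on the direction of $B$ whatsoever. For comparison: the paper's displayed intermediate equation omits exactly the $\theta(Y)\theta(\xi)\eta(X)$-type terms, and that omission is the only reason the relation $[\eta(Y)\theta(X)-\eta(X)\theta(Y)]\eta(B)=0$ appears to survive there; your plan, executed faithfully, exposes rather than repairs this slip.

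Moreover, no variant of this computation can close the gap, because the statement under the stated hypothesis (closedness alone, with no Einstein--Weyl condition) is false: on $\mathbb{R}^3$ (or $T^3$) with the standard cosymplectic structure $\eta=dz$, $\xi=\partial_z$ and the flat metric, the closed form $\theta=dx+dz$ defines via \eqref{2.1} a closed Weyl structure whose dual field $B=\partial_x+\partial_z$ lies in neither $\mathcal{D}$ nor $\mathbb{R}\xi$ at any point. Separately, even granting the pointwise relation $\eta(B)\,B^{\top}=0$, your globalization step does not work: for smooth data, vanishing of a product on a connected manifold does not force one factor to vanish identically (take the two factors supported in disjoint bumps), so closedness of the two zero sets plus a dense-open continuity argument cannot deliver a global dichotomy. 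Any correct result of this type must inject additional hypotheses (such as the Einstein--Weyl equation that the paper uses in its later sections), not closedness of $W$ alone.
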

\begin{proof}
By Proposition \ref{P1}, we obtain from \eqref{2.2} that for all vector fields $X,Y$,
\begin{align*}
  &(\nabla_X\theta)(\xi)\eta(Y)-\eta(Y)\eta(\nabla_XB)-\eta(X)\theta(Y)\eta(B)\\
  &- \Big[(\nabla_Y\theta)(\xi)\eta(X)-\eta(X)\eta(\nabla_YB)-\eta(Y)\theta(X)\eta(B)\Big]=0.
\end{align*}
Here we have used the relation $(\nabla_X\theta)Y=(\nabla_Y\theta)X$ which follows from $d\theta=0.$
Since
\begin{equation*}
  (\nabla_X\theta)(\xi)=\nabla_X(g(B,\xi))-\theta(\nabla_X\xi)=g(\nabla_XB,\xi)=\eta(\nabla_XB),
\end{equation*}
the above relation is simplified as
\begin{align*}
  \Big[-\eta(X)\theta(Y)+\eta(Y)\theta(X)\Big]\eta(B)=0.
\end{align*}
Thus by taking $Y=\xi$, we see that $\theta(X)\eta(B)=0$ for all $X\in\mathcal{D}$, that means that either $B\in\mathcal{D}$ or $B\in\mathbb{R}\xi$.
\end{proof}
A Weyl structure $W=(D,[g])$ is called {\it Einstein-Weyl} if the trace-free component of the symmetric part of $Ric^D$ is identically zero, namely there exists a smooth function $\Lambda$ such that the relation \eqref{2.4} holds.
Thus it follows from \eqref{2.3} and \eqref{2.4} that
\begin{equation}\label{3.5*}
Ric(X,Y)-\frac{2n-1}{2}((\nabla_X\theta)Y+(\nabla_Y\theta)X)+(2n-1)\theta(X)\theta(Y)=\sigma g(X,Y),
\end{equation}
where $\sigma=\delta\theta-(2n-1)|\theta|^2+\frac{\Lambda}{2}$.
Furthermore, if $M$ admits two Einstein-Weyl structures with $\theta$ and $-\theta$, then the following two equations hold for arbitrary
vector fields $X, Y$ in $M$ (Higa \cite{H}) :
\begin{align}
&(\nabla_X\theta)Y+(\nabla_Y\theta)X+\frac{2}{2n+1}\delta\theta g(X,Y)=0,\label{3.6**}\\
&Ric(X,Y)-\frac{r}{2n+1}g(X,Y)=\frac{2n-1}{2n+1}|\theta|^2g(X,Y)-(2n-1)\theta(X)\theta(Y)\label{3.7}.
\end{align}
Here $r$ denotes the scalar curvature of $M$.

Since the Weyl curvature tensor $R^D$ and the Weyl Ricci tensor
$Ric^D$ of closed Einstein-Weyl structures defined on compact manifolds vanish identically (see \cite{Ga}), from \eqref{2.2} and \eqref{2.3}
 we obtain
\begin{align}\label{2.7*}
R(X,Y)Z=&\{(\nabla_Y\theta)Z-\theta(Y)\theta(Z)\}X-\{(\nabla_X\theta)Z-\theta(X)\theta(Z)\}Y\\
&+g(Y,Z)\{(\nabla_XB-\theta(X)B\}-g(X,Z)\{(\nabla_YB-\theta(Y)B\}\nonumber\\
&+|\theta|^2\{g(Y,Z)X-g(X,Z)Y\},\nonumber
\end{align}
\begin{align}\label{3.6*}
  Ric(Y,Z)=&(2n-1)(\nabla_Y\theta)(Z)\\
  &-(2n-1)\theta(Z)\theta(Y)-(\delta\theta-(2n-1)|\theta|^2)g(Y,Z).\nonumber
\end{align}
Moreover, using \eqref{3.6*} we obtain
\begin{equation}\label{3.9}
  (2n-1)\nabla_XB=QX+(2n-1)\theta(X)B+\lambda X,
\end{equation}
where $\lambda=\delta\theta-(2n-1)|\theta|^2$ and $Q$ is the Ricci operator defined by $\mathrm{Ric}(X,Y)=g(QX,Y)$ for any vectors $X,Y$.
Thus from \eqref{3.9}, it is easy to yield
\begin{align}\label{2.10*}
(2n-1)R(X,Y)B=&(\nabla_XQ)Y-(\nabla_YQ)X+\theta(Y)QX\\
           &-\theta(X)QY+\lambda[\theta(Y)X-\theta(X)Y]\nonumber\\
&+(X\lambda)Y -(Y\lambda)X.\nonumber
\end{align}
Taking \eqref{3.9} into account,  the formula \eqref{2.7*} becomes
\begin{align}\label{2.12}
(2n-1)R(X,Y)Z=&\{Ric(Y,Z)-2(2n-1)\theta(Y)\theta(Z)\}X\\
&-\{Ric(X,Z)-2(2n-1)\theta(X)\theta(Z)\}Y\nonumber\\
&+g(Y,Z)\{QX-2(2n-1)\theta(X)B\}\nonumber\\
&-g(X,Z)\{QY-2(2n-1)\theta(Y)B\}\nonumber\\
&+((2n-1)|\theta|^2+2\lambda)\{g(Y,Z)X-g(X,Z)Y\}.\nonumber
\end{align}
Furthermore, putting $Y=Z=\xi$ in \eqref{2.12} gives
\begin{align}\label{5.3}
(2n-1)R(X,\xi)\xi=&\{Ric(\xi,\xi)-2(2n-1)\theta(\xi)^2\}X\\
&-\{Ric(X,\xi)-2(2n-1)\theta(X)\theta(\xi)\}\xi\nonumber\\
&+\{QX-2(2n-1)\theta(X)B\}\nonumber\\
&-\eta(X)\{Q\xi-2(2n-1)\theta(\xi)B\}\nonumber\\
&+((2n-1)|\theta|^2+2\lambda)\{X-\eta(X)\xi\}.\nonumber
\end{align}

\section{Einstein-Weyl structures on almost cosymplectic $(\kappa,\mu)$-manifolds }
In this section we suppose that $(M^{2n+1},\phi,\xi,\eta,g)$ is an almost cosymplectic $(\kappa,\mu)$-manifold, namely the
curvature tensor satisfies \eqref{4.1}. By definition, the equations \eqref{2.2*}-\eqref{2.10} with $\alpha=0$ hold. Furthermore, the following relations are provided (see \cite[Eq.(3.22) and Eq.(3.23)]{MNY}):
\begin{align}
  Q=&2n\kappa\eta\otimes\xi+\mu h,\label{3.1**}\\
  h^2=&\kappa\phi^2\label{3.2}.
\end{align}
Using \eqref{2.2*}, it follows from \eqref{3.1**} that the scalar curvature $r=2n\kappa$ and $Q\xi=2n\kappa\xi.$
By \eqref{3.2}, we find easily that $\kappa\leq0$ and $\kappa=0$ if and only if $M$ is a cosymplectic manifold, thus in the following we always suppose
$\kappa<0$.
\begin{theorem}\label{T1}
A $(2n+1)$-dimensional almost
$(\kappa,\mu)$-cosymplectic manifold admitting a closed Einstein-Weyl structure is an Einstein manifold or a cosymplectic manifold.
\end{theorem}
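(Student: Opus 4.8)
The plan is to combine the defining $(\kappa,\mu)$ relation \eqref{4.1} with the structure equations \eqref{3.1**}--\eqref{3.2} and the closed Einstein-Weyl machinery of Section 3. Since the manifold is compact-free in the statement but the key curvature identities \eqref{2.7*}--\eqref{5.3} require the closed Einstein-Weyl curvature to vanish, I would first confirm we may use those identities (they hold once $R^D=0$, which is the content of a closed Einstein-Weyl structure in this setting). With $\alpha=0$ throughout, equation \eqref{2.10} reduces to $R(X,\xi)\xi=-h^2X+\phi(\nabla_\xi h)X$, and by \eqref{3.2} we have $h^2=\kappa\phi^2=-\kappa X+\kappa\eta(X)\xi$ on the contact distribution. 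The first substantive step is therefore to evaluate \eqref{5.3} using the explicit Ricci data $Q=2n\kappa\,\eta\otimes\xi+\mu h$ and $Ric(\xi,\xi)=-\mathrm{trace}(h^2)=2n\kappa$ (from \eqref{2.8} with $\alpha=0$, consistent with $r=2n\kappa$), turning \eqref{5.3} into an algebraic identity relating $R(X,\xi)\xi$, $hX$, $\theta(X)$, $B$, and $\eta(X)$.

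Next I would extract information about $\theta$ and $B$. By Proposition \ref{P3} the closedness forces either $B\in\mathbb{R}\xi$ or $B\in\mathcal{D}$; I expect to split into these two cases. Comparing the expression for $R(X,\xi)\xi$ coming from \eqref{5.3} with the one coming from \eqref{4.1} (namely $R(X,\xi)\xi=-\kappa\phi^2X-\mu\phi hX$ after setting $Y=\xi$ and using $h\xi=0$) yields an operator identity on $\mathcal{D}$. The strategy is to read this identity as a relation among the tensors $I$, $h$, and the rank-one pieces built from $\theta$ and $B$; matching the $h$-components and the multiples of the identity should pin down $\theta$ and the constants. In particular I anticipate that evaluating on $\xi$ and using $h\xi=0$, $\theta(\xi)=\eta(B)$ isolates $\theta(\xi)$, after which feeding this back controls the distribution-part of $B$.

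The decisive dichotomy should emerge from the $\mu h$-term: the left side of the matched identity contains a term proportional to $\mu\,hX$, while the right side built from $\theta\otimes\theta$ and $\eta\otimes B$ terms can only reproduce such a term if $\theta$ itself is essentially aligned with the eigenstructure of $h$. I would argue that, generically, consistency forces $\theta=0$ (equivalently $B=0$), which by \eqref{3.6*} collapses $Ric$ to a scalar multiple of $g$ plus an $\eta\otimes\eta$ correction; combined with $Q=2n\kappa\,\eta\otimes\xi+\mu h$ this pushes $\mu h=0$, hence either $h=0$ or $\mu=0$. Since \eqref{3.2} gives $h^2=\kappa\phi^2$ with $\kappa<0$, $h=0$ is impossible, so one is driven toward the branch where the Ricci operator becomes a multiple of the identity on $\mathcal{D}$, i.e. the Einstein case; the complementary branch, where the algebra instead forces $\kappa\to0$, is exactly the cosymplectic conclusion.

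The main obstacle I foresee is the bookkeeping in the operator identity obtained by equating the two forms of $R(X,\xi)\xi$: the closed Einstein-Weyl formula \eqref{5.3} mixes $\theta(X)$, $\theta(\xi)$, $\eta(X)$, and $B$ in several rank-one terms, and separating the $h$-eigenspace contributions from the identity contributions requires care because $h$ is traceless with $\phi h=-h\phi$, so $h$ has paired eigenvalues $\pm\sqrt{-\kappa}$. The cleanest route is probably to apply $\phi$ to the matched identity and use \eqref{2.6} (with $\alpha=0$) as an independent symmetry constraint, thereby forcing the off-diagonal ($\theta\otimes B$-type) terms to vanish before comparing the diagonal parts. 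I expect the Einstein-versus-cosymplectic split to be precisely the two ways the resulting scalar equations can be satisfied.
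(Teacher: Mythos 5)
Your proposal founders at its first step: the identities \eqref{2.7*}--\eqref{5.3} are \emph{not} consequences of having a closed Einstein--Weyl structure alone. Closedness means $d\theta=0$; the vanishing of $R^D$ and $Ric^D$ that underlies \eqref{2.7*}, \eqref{3.6*}, \eqref{3.9}, \eqref{2.12} and \eqref{5.3} is Gauduchon's theorem, which the paper invokes precisely for closed Einstein--Weyl structures \emph{on compact manifolds} (see the sentence preceding \eqref{2.7*} and the reference \cite{Ga}). Theorem \ref{T1} carries no compactness hypothesis, so your plan of equating \eqref{5.3} with the $(\kappa,\mu)$ expression for $R(X,\xi)\xi$ has no valid starting point, and the whole matching argument built on it (isolating $\theta(\xi)$, separating $h$-components from identity components) collapses. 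The paper's proof avoids this entirely: it uses only the pointwise Einstein--Weyl equation \eqref{3.5*}, which follows from \eqref{2.3} and \eqref{2.4} with no global assumption, together with $d\theta=0$, \eqref{2.5}, \eqref{3.1**}, \eqref{3.2} and \eqref{2.9}. Concretely, in the case $\theta=f\eta$ one substitutes into \eqref{3.5*}, replaces $X$ by $hX$, pairs with $\phi X$ and traces to force $f=0$, giving the Einstein alternative; in the case $B\in\mathcal{D}$, evaluating \eqref{3.5*} at $Y=\xi$ gives $\sigma=2n\kappa$ and $hB=0$, hence $B=0$ by \eqref{3.2}, and then the trace of \eqref{3.5*} contradicts $\kappa<0$, so this case is impossible.

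Two further defects are worth naming. First, your formula from \eqref{4.1} is wrong: setting $Y=\xi$ gives $R(X,\xi)\xi=\kappa(X-\eta(X)\xi)+\mu hX=-\kappa\phi^2X+\mu hX$, not $-\kappa\phi^2X-\mu\phi hX$. Second, your endgame is asserted rather than proved: ``generically, consistency forces $\theta=0$'' and ``the complementary branch, where the algebra instead forces $\kappa\to0$, is exactly the cosymplectic conclusion'' are expectations, not arguments. In the paper the cosymplectic alternative does not emerge from the Einstein--Weyl algebra at all: it is dispatched at the outset, since \eqref{3.2} forces $\kappa\le0$ with $\kappa=0$ if and only if $M$ is cosymplectic, so one assumes $\kappa<0$ throughout and then proves the Einstein alternative or derives a contradiction. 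To salvage your route you would have to either add a compactness hypothesis (thereby proving a strictly weaker statement than Theorem \ref{T1}) or replace every appeal to \eqref{2.7*}--\eqref{5.3} by the pointwise equation \eqref{3.5*}, at which point you would essentially be reproducing the paper's argument.
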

\begin{proof}
By Proposition \ref{P3}, we know that either $B\in\mathcal{D}$ or $B\in\mathbb{R}\xi$.
Next we consider these two cases respectively.

We first assume that $\theta=f\eta$ for some function $f$. Since $d\theta=0$, by \eqref{2.5}, Eq.\eqref{3.5*} is rewritten as
\begin{align}\label{4.4}
Ric(X,Y)&-(2n-1)(X(f)\eta(Y)-fg(\phi hX,Y))\\
      &+(2n-1)f^2\eta(X)\eta(Y)=\sigma g(X,Y).\nonumber
\end{align}
That is,
\begin{align*}
QX-(2n-1)X(f)\xi+(2n-1)f\phi hX+(2n-1)f^2\eta(X)\xi=\sigma X.
\end{align*}
Applying \eqref{3.1**} in the above formula gives
\begin{equation*}
  2n\kappa\eta(X)\xi+\mu hX-(2n-1)X(f)\xi+(2n-1)f\phi hX+(2n-1)f^2\eta(X)\xi=\sigma X.
\end{equation*}
Replacing $X$ by $hX$ and using \eqref{3.2}, we have
\begin{equation*}
  \sigma hX+(2n-1)f\kappa\phi X=\kappa\mu \phi^2X-(2n-1)hX(f)\xi.
\end{equation*}
Moreover, by taking an inner product of the foregoing relation with $\phi X$ and contracting $X$ over the resulting equation, we get
\begin{equation*}
  -\sigma\;\mathrm{trace}(\phi h)+2n(2n-1)f\kappa=0.
\end{equation*}
Thus the relation \eqref{2.9} shows that $f=0$ and $M$ is Einstein from \eqref{4.4}.

In the following we consider the case where $B\in\mathcal{D}$. In view of \eqref{3.1**} and \eqref{2.5}, the equation \eqref{3.5*} with $Y=\xi$ becomes
\begin{equation*}
(2n\kappa-\sigma)\eta(X)-(2n-1)\theta(\phi hX)=0.
\end{equation*}
Putting $X=\xi$ implies $2n\kappa=\sigma$, so we get $hB=0$ by the above formula. Furthermore it yields from \eqref{3.2} that $\kappa\phi^2B=0$, i.e. $B=0$.

On the other hand, contracting $X$ over \eqref{3.5*} we have
\begin{equation*}
r-(2n-1)\delta\theta+(2n-1)|\theta|^2=(2n+1)\sigma.
\end{equation*}
Because the scalar curvature $r=2n\kappa$, we derive
\begin{equation*}
\delta\theta-|\theta|^2=-\frac{4n^2\kappa}{2n-1}.
\end{equation*}
It comes to a contradiction with $\kappa<0$, hence it is impossible.

Summing up the above discussion, we complete the proof.
\end{proof}
If $M$ admits two Einstein-Weyl structures with $\theta$ and $-\theta$, we immediately prove the following result.
\begin{theorem}\label{T4.2}
A $(2n+1)$-dimensional almost $(\kappa,\mu)$-cosymplectic manifold admitting two Einstein-Weyl structures with $\theta$ and $-\theta$ is either cosymplectic or Einstein.
\end{theorem}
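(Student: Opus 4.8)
The plan is to feed the two–Einstein–Weyl hypothesis through Higa's identities \eqref{3.6**} and \eqref{3.7} and to confront them with the two structural identities \eqref{3.1**} and \eqref{3.2} peculiar to an almost $(\kappa,\mu)$-cosymplectic manifold. I keep the standing convention $\kappa<0$, since $\kappa=0$ is exactly the cosymplectic alternative; thus it suffices to show that $\kappa<0$ cannot occur, the Einstein alternative appearing only in the degenerate situation where the Lee form vanishes.

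First I would extract a dichotomy on $\theta$ that plays the role Proposition \ref{P3} played in Theorem \ref{T1}. Putting $Y=\xi$ in \eqref{3.7} and using $Q\xi=2n\kappa\xi$ together with $r=2n\kappa$ collapses that equation to the pointwise relation
\[
\tfrac{1}{2n+1}\big(4n^2\kappa-(2n-1)|\theta|^2\big)\,\eta(X)=-(2n-1)\,\theta(\xi)\,\theta(X)
\]
for all $X$. Evaluating at $X=\xi$ identifies the left-hand scalar with $-(2n-1)\theta(\xi)^2$, and re-inserting this value yields $(2n-1)\theta(\xi)\big[\theta(X)-\theta(\xi)\eta(X)\big]=0$, so that either $\theta(\xi)=0$ (i.e. $B\in\mathcal D$) or $\theta=\theta(\xi)\eta$ (i.e. $B\in\mathbb R\xi$).

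The case $B\in\mathcal D$ is disposed of at once: $\theta(\xi)=0$ forces $(2n-1)|\theta|^2=4n^2\kappa$, whose left side is $\ge 0$ while the right side is $<0$, a contradiction. The remaining case $\theta=a\eta$ with $a:=\theta(\xi)$ is where the work lies, and it is the step I expect to be the main obstacle, because the algebraic comparison alone is perfectly consistent there (it only pins down $\kappa=-\tfrac{2n-1}{2n}a^2$), so one must invoke the differential identity \eqref{3.6**}. Using $\nabla_X\xi=-\phi hX$ from \eqref{2.5} (with $\alpha=0$) I would compute $(\nabla_X\theta)Y=X(a)\eta(Y)-a\,g(\phi hX,Y)$, symmetrize, and note that $\phi h$ is self-adjoint while $\mathrm{trace}(\phi h)=0$ by \eqref{2.9}. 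Testing the symmetrized identity first on $(\xi,\xi)$ and then on $(X,\xi)$ should force $\xi(a)=0$, then $a$ constant and $\delta\theta=0$, after which \eqref{3.6**} reduces to $a\,g(\phi hX,Y)=0$ for all $X,Y$.

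Since $a\neq 0$ in this branch, this gives $\phi h=0$, hence $h=0$ (apply $\phi$ and use $\phi^2=-I+\eta\otimes\xi$), and then \eqref{3.2} yields $\kappa\phi^2=h^2=0$, i.e. $\kappa=0$, contradicting $\kappa<0$. Therefore the assumption $\kappa<0$ is untenable, so $\kappa=0$ and $M$ is cosymplectic; in the degenerate subcase $\theta\equiv 0$ the identity \eqref{3.7} reads $Ric=\tfrac{r}{2n+1}g$, so $M$ is Einstein. This establishes the stated dichotomy. The only delicate bookkeeping is the symmetrization-and-trace step that produces $\phi h=0$; the rest is algebraic substitution into \eqref{3.1**}, \eqref{3.2} and \eqref{3.7}.
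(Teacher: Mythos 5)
Your proof is correct, and its skeleton is the same as the paper's: you obtain the identical pointwise dichotomy (the paper's \eqref{4.11}) by putting $Y=\xi$ in \eqref{3.7} and using $Q\xi=2n\kappa\xi$, $r=2n\kappa$ from \eqref{3.1**}, and you eliminate the branch $B\in\mathcal{D}$ by exactly the paper's sign contradiction $(2n-1)|\theta|^2=4n^2\kappa<0$. Where you genuinely diverge is the branch $\theta=a\eta$. The paper substitutes $X\mapsto hX$, $Y\mapsto\phi X$ into the reduced form of \eqref{3.6**} and contracts; by $h^2=\kappa\phi^2$ from \eqref{3.2} and $\mathrm{trace}(\phi h)=0$ from \eqref{2.9} this collapses in one line to $4n\kappa a=0$, hence $a=0$ (Einstein, by \eqref{4.5}) or $h=0$ (cosymplectic). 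You instead evaluate the reduced identity at $(\xi,\xi)$ and $(X,\xi)$ and combine with its trace to show $a$ is constant and $\delta\theta=0$, so that the identity degenerates to $a\,\phi h=0$; a nonzero constant $a$ then gives $\phi h=0$, hence $h=0$ and $\kappa=0$ by \eqref{3.2}, a contradiction, leaving only $\theta\equiv0$, i.e.\ the Einstein alternative. Both routes run on the same two structural facts \eqref{3.2} and \eqref{2.9}; the paper's trace trick is shorter, while yours yields the stronger intermediate statements (the Lee form is a parallel multiple of $\eta$ and $\phi h\equiv 0$), which makes the incompatibility with $\kappa<0$ more transparent.

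Two points to tighten. First, testing at $(\xi,\xi)$ and $(X,\xi)$ alone only gives $da=\xi(a)\eta$; to conclude $\xi(a)=0$ you must also use the trace of \eqref{3.6**} (equivalently, compute $\delta\theta$ in terms of $\xi(a)$ from $\theta=a\eta$, which is where $\mathrm{trace}(\phi h)=0$ actually enters), so spell that step out rather than folding it into ``should force.'' Second, your constancy argument differentiates the relation $\theta=a\eta$, so it needs that relation to hold on all of $M$, not just pointwise on one branch of the dichotomy; this is legitimate precisely because the alternative $\theta(\xi)=0$ is ruled out at every point when $\kappa<0$, and it is worth saying so explicitly (the paper glosses over the same pointwise-versus-global issue).
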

\begin{proof}
By \eqref{3.1**}, the formula \eqref{3.7} with $Y=\xi$ becomes
\begin{align}\label{4.11}
\frac{4n^2\kappa-(2n-1)|\theta|^2}{2n+1}\eta(X)=-(2n-1)\theta(X)\theta(\xi),
\end{align}
which shows that either $B\in\mathbb{R}\xi$ or $B\in\mathcal{D}$ by taking an arbitrary $X\in\mathcal{D}$.

If $B\in\mathbb{R}\xi$, we may set $B=f\xi$ for some smooth function $f$ on $M$. The equation \eqref{3.7} becomes
\begin{equation}\label{4.5}
  Ric(X,Y)=\frac{r+(2n-1)f^2}{2n+1}g(X,Y)-(2n-1)f^2\eta(X)\eta(Y).
\end{equation}
Furthermore, in terms of \eqref{3.6**}, we get
\begin{equation*}
X(f)\eta(Y)-2fg(\phi hX,Y)+Y(f)\eta(X)+\frac{2}{2n+1}\xi(f) g(X,Y)=0.
\end{equation*}
Replacing $X$ by $hX$ and $Y$ by $\phi X$ and contracting $X$ over the resulting equation, we can prove that $f=0$ or $h=0$. Therefore $M$ is an Einstein manifold or a cosymplectic manifold by \eqref{4.5}.

For the case where $B\in\mathcal{D}$, we derive from \eqref{4.11} that $4n^2\kappa=(2n-1)|\theta|^2$. Since $\kappa<0$, it leads to a contradiction.
\end{proof}

\section{Einstein-Weyl structures on almost $\alpha$-cosymplectic manifolds }
In this section we study  an almost $\alpha$-cosymplectic manifold admitting Einstein-Weyl structures. First we consider the case of three dimension.
\begin{theorem}
Let $(M^3,\Phi,\xi,\eta,g)$ be a compact almost $\alpha$-cosymplectic manifold. Suppose that $M$ admits a closed Einstein-Weyl structure. Then $M$ is Ricci-flat.
\end{theorem}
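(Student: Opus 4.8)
The plan is to specialize the general structure equations for closed Einstein–Weyl structures on compact manifolds—namely \eqref{5.3}, together with the curvature/Ricci relations \eqref{2.6}--\eqref{2.10} valid for almost $\alpha$-cosymplectic manifolds—to dimension three, where the Weyl tensor vanishes automatically and the curvature is completely determined by the Ricci tensor. First I would use Proposition \ref{P3} to split into the two cases $B\in\mathbb{R}\xi$ and $B\in\mathcal{D}$, since compactness and closedness force the vanishing of $R^D$ and $Ric^D$ and hence the identities \eqref{2.7*}--\eqref{5.3}. In the case $\theta=f\eta$, I would substitute $\nabla_X\xi=-\alpha\phi^2X-\phi hX$ from \eqref{2.5} into \eqref{3.5*} (equivalently \eqref{3.6*}) and use $n=1$, aiming to extract an equation that pins down $f$; the relevant trace identities are \eqref{2.9} and \eqref{2.8}, which in dimension three become especially rigid because $h^2$ is controlled by a single eigenvalue function.

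The next step is to exploit compactness through integration. On a compact manifold one has $\int_M \delta\theta\,dV_g=0$, and more importantly the divergence of natural one-forms built from $\theta$, $Q$, and $\xi$ integrates to zero. I would compute the scalar curvature $r$ by tracing \eqref{3.6*}, obtaining a relation of the form $r=\delta\theta-(2n-1)|\theta|^2+\ldots$, and then trace again or take the divergence of \eqref{3.9} to produce an integral identity. The key contraction is \eqref{5.3} with $n=1$: setting $Y=Z=\xi$ relates $R(X,\xi)\xi$ to $QX$, $Ric(\xi,\xi)$, and $\theta$; comparing this with the intrinsic formula \eqref{2.10} for $R(X,\xi)\xi$ in the almost $\alpha$-cosymplectic setting yields an algebraic constraint linking $h^2$, $\alpha^2$, and the Weyl data. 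In three dimensions $hX$ has eigenvalues $\pm\lambda$ on $\mathcal{D}$, so $\mathrm{trace}(h^2)=2\lambda^2$, and \eqref{2.8} gives $Ric(\xi,\xi)=-2\alpha^2-2\lambda^2$, which should feed cleanly into the trace computations.

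The heart of the argument should be showing that the scalar curvature $r$ is constant and then that it must vanish. I expect that combining the traced Einstein–Weyl relation with the compactness integral forces $r=\mathrm{const}$, after which a maximum-principle or integration-by-parts argument—integrating an expression like $\delta\theta-|\theta|^2$ against a suitable test function, or using $\int_M \delta\theta\,dV_g=0$ together with a nonnegative integrand—should drive $|\theta|^2$ and the relevant curvature quantities to zero simultaneously. Once $\theta$ (hence $B$) is controlled, \eqref{2.7*} collapses the full curvature tensor to the metric part, and tracing gives $Ric\equiv 0$. The case $B\in\mathcal{D}$ I would expect to eliminate similarly by deriving a pointwise contradiction or by forcing $B=0$ as in the proof of Theorem \ref{T1}.

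The main obstacle will be the integration step that yields constancy and then vanishing of the scalar curvature: unlike the pointwise algebra of Theorems \ref{T1} and \ref{T4.2}, here I cannot avoid genuinely using compactness, and the difficulty is finding the right divergence identity so that Green's theorem produces a sign-definite integrand. The presence of $\alpha$ (which does not vanish a priori) and the coupling between $\theta$ and the $h$-eigenfunction $\lambda$ make it unclear at the outset which combination of \eqref{2.6}--\eqref{2.10} and \eqref{5.3} integrates to something manifestly nonnegative; isolating that combination, rather than the three-dimensional curvature bookkeeping, is where the real work lies.
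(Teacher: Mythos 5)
Your scaffolding coincides with the paper's: the case split from Proposition \ref{P3}, compactness entering through Gauduchon's vanishing of $R^D$ and $Ric^D$ (hence the identities \eqref{2.7*}--\eqref{5.3}), the three-dimensional curvature decomposition \eqref{5.4*}, and the comparison of $R(X,\xi)\xi$ obtained from \eqref{5.3} and \eqref{5.4*} with the intrinsic formula \eqref{2.10}. The genuine gap is that the step which actually produces $f=0$ in the case $\theta=f\eta$ is absent: you defer it to an integration argument (constancy of $r$, then a maximum principle or a sign-definite integrand under Green's theorem), and you state yourself that you do not know which divergence identity would accomplish this. That hoped-for identity is precisely what is missing from your proposal, and it is also the wrong place to look: in the paper compactness is used exactly once, through Gauduchon's theorem, and everything afterwards is pointwise algebra. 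The actual mechanism is a comparison of two computations of $R(X,Y)B$. From \eqref{3.9} one gets the explicit Ricci operator \eqref{5.7}; differentiating \eqref{5.7} and inserting it into \eqref{2.10*} (itself the derivative of \eqref{3.9}) gives the expression \eqref{5.9} for $R(X,Y)B$, while substituting $Z=B$ into the three-dimensional formula \eqref{5.3*} gives a second one. Equating them, setting $Y=\xi$, simplifying with \eqref{5.4**}, and evaluating on an eigenvector $e_1\in\mathcal{D}$ of $h$ with eigenvalue $\nu\neq0$ yields the two pointwise relations $\xi(f)\nu=0$ and $\xi(f)\alpha+2\xi(\lambda)=4f^2\alpha$; combined with $\lambda=\xi(f)+2\alpha f-f^2$ these force $f=0$, whence $Q=0$ by \eqref{5.7}. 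Nothing in your plan points toward this differentiation-and-comparison step. (As a side remark, the choice of $\nu\neq 0$ tacitly assumes $h\neq 0$, so the degenerate case $h\equiv 0$ is left implicit in the paper as well; but that is an issue with the paper's write-up, not support for the integral route.)

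For the case $B\in\mathcal{D}$ your expectation that $B$ is forced to vanish is correct in outcome, but again you supply no derivation, and the argument is not an analogue of Theorem \ref{T1}, which relied on the $(\kappa,\mu)$ identities \eqref{3.1**} and \eqref{3.2} that are unavailable here. It is a single contraction: substituting \eqref{5.4*} into \eqref{5.3} with $n=1$ gives \eqref{5.5*}; putting $X=B$ shows that either $B=0$ or $|\theta|^2=2\lambda+\frac{r}{2}$, and in the latter case contracting \eqref{5.5*} over $X$ gives $|\theta|^2=0$; then \eqref{3.6*} yields $Ric\equiv 0$. In summary, you have reconstructed the framework but not the proof: the core identity that kills $f$ and $B$ is missing, and the analytic machinery you plan to develop in its place is never needed.
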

\begin{proof}
As before, by Proposition \ref{P3}, $B\in\mathcal{D}$ or $\theta=f\eta$ where $f=\theta(\xi).$
Next we divide into two cases to discuss.

{\bf Case I.} First we set $\theta=f\eta$ for some function $f$. By \eqref{2.5}, we have
\begin{equation}\label{5.1}
  \nabla_XB=X(f)\xi-f(\alpha\phi^2X+\phi hX).
\end{equation}
Since $d\theta=0$, i.e. $g(\nabla_XB,Y)=g(X,\nabla_YB)$ for all $X,Y\in\mathfrak{X}(M)$, we get $X(f)\eta(Y)=Y(f)\eta(X).$ That means that the gradient vector field $Df=\xi(f)\xi$.
Applying Poincare lemma $d^2=0$, we obtain $g(\nabla_XDf,Y)=g(X,\nabla_YDf)$ for all $X,Y$, thus $\xi(\xi(f))\eta(X)=X(\xi(f))$ by \eqref{2.2*}.
Using \eqref{5.1}, the formula \eqref{3.9} becomes
\begin{equation}\label{5.7}
  QX=X(f)\xi-f(\alpha\phi^2X+\phi hX)-f^2\eta(X)\xi-\lambda X.
\end{equation}
Furthermore, the scalar curvature $r=\xi(f)+2\alpha f-f^2-3\lambda$. In terms of \eqref{5.1} and using \eqref{2.9}, we compute $\lambda=\delta\theta-|\theta|^2=\xi(f)+2\alpha f-f^2$, so we find $r=-2\lambda$.

 On the other hand, it is well known that the curvature tensor of a 3-dimensional Riemannian manifold is given by
\begin{align}\label{5.4*}
  R(X,Y)Z=&g(Y,Z)QX-g(X,Z)QY+g(QY,Z)X-g(QX,Z)Y\\
           &-\frac{r}{2}\{g(Y,Z)X-g(X,Z)Y\}.\nonumber
  \end{align}
Hence substituting \eqref{5.7} into \eqref{5.4*} yields
\begin{align}\label{5.3*}
  R(X,Y)Z=&g(Y,Z)\Big[X(f)\xi-f(\alpha\phi^2X+\phi hX)-f^2\eta(X)\xi\Big]\\
&-g(X,Z)\Big[Y(f)\xi-f(\alpha\phi^2Y+\phi hY)-f^2\eta(Y)\xi\Big]\nonumber\\
&+g\Big(Y(f)\xi-f(\alpha\phi^2Y+\phi hY)-f^2\eta(Y)\xi-\lambda Y,Z\Big)X\nonumber\\
&-g\Big(X(f)\xi-f(\alpha\phi^2X+\phi hX)-f^2\eta(X)\xi-\lambda X,Z\Big)Y.\nonumber
  \end{align}
Putting $Y=Z=\xi$ gives
\begin{align}\label{}
  R(X,\xi)\xi=&-f(\alpha\phi^2X+\phi hX)+(\xi(f)-f^2-\lambda)X\nonumber\\
&-\Big(X(f)-f^2\eta(X)-\lambda \eta(X)\Big)\xi.\nonumber
  \end{align}
Connecting this with \eqref{2.10} implies
\begin{align}\label{5.4**}
&\alpha^2\phi^2X+2\alpha\phi hX-h^2X+\phi(\nabla_\xi h)X\\
=&-f(\alpha\phi^2X+\phi hX)-2\alpha fX-\Big(X(f)-(\xi(f)+2\alpha f)\eta(X)\Big)\xi\nonumber
\end{align}

On the other and, differentiating \eqref{5.7} along $Y$ and using \eqref{2.5}, we conclude
\begin{align*}
 (\nabla_YQ)X=&Y(\xi(f))\eta(X)\xi+\xi(f)g(\nabla_Y\xi,X)\xi+X(f)\nabla_Y\xi\\
&-f\Big(\alpha(\nabla_Y\phi^2)X+(\nabla_Y\phi)hX+\phi(\nabla_Yh)X\Big)\nonumber\\
&-2fY(f)\eta(X)\xi-f^2g(\nabla_Y\xi,X)\xi\nonumber\\
&-f^2\eta(X)\nabla_Y\xi-Y(\lambda)X.\nonumber
\end{align*}
Thus the equation \eqref{2.10*} becomes
\begin{align}\label{5.9}
R(X,Y)B=&Y(\xi(f))\eta(X)\xi+\xi(f)g(\nabla_Y\xi,X)\xi+X(f)\nabla_Y\xi\\
&-f\Big(\alpha(\nabla_Y\phi^2)X+(\nabla_Y\phi)hX+\phi(\nabla_Yh)X\Big)\nonumber\\
&-3fY(f)\eta(X)\xi-f^2g(\nabla_Y\xi,X)\xi-2f^2\eta(X)\nabla_Y\xi\nonumber\\
&-\Big[X(\xi(f))\eta(Y)\xi+\xi(f)g(\nabla_X\xi,Y)\xi+Y(f)\nabla_X\xi\nonumber\\
&-f\Big(\alpha(\nabla_X\phi^2)Y+(\nabla_X\phi)hY+\phi(\nabla_Xh)Y\Big)\nonumber\\
&-3fX(f)\eta(Y)\xi-f^2g(\nabla_X\xi,Y)\xi-2f^2\eta(Y)\nabla_X\xi\Big]\nonumber\\
&+2(X\lambda)Y -2(Y\lambda)X.\nonumber
\end{align}

By comparing \eqref{5.9} and \eqref{5.3*} with $Z=B$, we have
\begin{align*}
  &Y(\xi(f))\eta(X)\xi+\xi(f)g(\nabla_Y\xi,X)\xi+X(f)\nabla_Y\xi\\
&-f\Big(\alpha(\nabla_Y\phi^2)X+(\nabla_Y\phi)hX+\phi(\nabla_Yh)X\Big)\nonumber\\
&-2fY(f)\eta(X)\xi-f^2g(\nabla_Y\xi,X)\xi-f^2\eta(X)\nabla_Y\xi\nonumber\\
&-\Big[X(\xi(f))\eta(Y)\xi+\xi(f)g(\nabla_X\xi,Y)\xi+Y(f)\nabla_X\xi\\
&-f\Big(\alpha(\nabla_X\phi^2)Y+(\nabla_X\phi)hY+\phi(\nabla_Xh)Y\Big)\nonumber\\
&-2fX(f)\eta(Y)\xi-f^2g(\nabla_X\xi,Y)\xi-f^2\eta(Y)\nabla_X\xi\Big]+2(X\lambda)Y -2(Y\lambda)X.\\
=&f\Big(Y(f)-f^2\eta(Y)-\lambda \eta(Y)\Big)X-f\Big(X(f)-f^2\eta(X)-\lambda\eta(X)\Big)Y.\nonumber
  \end{align*}
Now let us put $Y=\xi$, then the above formula is simplified as
\begin{align*}
  &-f\Big(\phi(\nabla_\xi h)X\Big)-2f\xi(f)\eta(X)\xi\nonumber\\
&-\Big[\xi(f)\nabla_X\xi+f\Big(\alpha^2\phi^2X+2\alpha\phi hX-h^2X\Big)\nonumber\\
&-fX(f)\xi-f^2\nabla_X\xi\Big]+2(X\lambda)\xi -2\xi(\lambda)X.\\
=&f\Big(\xi(f)-f^2-\lambda\Big)X+f\eta(X)\Big(f^2+\lambda\Big)\xi.\nonumber
  \end{align*}
Furthermore, by \eqref{5.4**}, the above formula becomes
\begin{align*}
  &-2f\xi(f)\eta(X)\xi-\xi(f)\nabla_X\xi+2fX(f)\xi+2X(\lambda)\xi-2\xi(\lambda)X\\
=&-4f^2\alpha X+2f\eta(X)\Big(f^2+\lambda\Big)\xi.
\end{align*}
Take $X=e_1\in\mathcal{D}$ such that $he_1=\nu e_1$ with $\nu$ being a non-zero function, so the above relation becomes
\begin{align*}
-\xi(f)\nabla_{e_1}\xi+2e_1(\lambda)\xi-2\xi(\lambda)e_1=-4f^2\alpha e_1.
\end{align*}
From \eqref{2.5}, we obtain
\begin{equation*}
  \left\{
     \begin{array}{ll}
       \xi(f)\nu=0,  \\
       \xi(f)\alpha+2\xi(\lambda)=4f^2\alpha.
     \end{array}
   \right.
\end{equation*}
Since $\nu\neq0$ and $\lambda=\xi(f)+2\alpha f-f^2$, the foregoing relations show that $f=0$, so $M$ is Ricci-flat by \eqref{5.7}.

{\bf Case II.} When $B\in\mathcal{D}$, from \eqref{5.3} and \eqref{5.4*} with $n=1$, we follow
\begin{align}\label{5.5*}
  0=&-2\theta(X)B+(|\theta|^2+2\lambda+\frac{r}{2})\{X-\eta(X)\xi\}.
\end{align}
When $X=B$, the above relation becomes
\begin{align*}
  \Big(|\theta|^2-2\lambda-\frac{r}{2}\Big)B=0.
\end{align*}
Thus  $|\theta|^2-2\lambda-\frac{r}{2}=0$ or $B=0$. If $|\theta|^2-2\lambda-\frac{r}{2}=0$, the formula \eqref{5.5*} becomes
\begin{align*}
  0=-\theta(X)B+|\theta|^2\{X-\eta(X)\xi\}.
\end{align*}
Contracting  $X$ over this equation, we also get $|\theta|^2=0$. By \eqref{3.6*}, thus $M$ is Ricci-flat.
\end{proof}

Next we consider the case where $M$ admits two Einstein-Weyl structures $\pm\theta$ and obtain the following result.
\begin{theorem}
Let $(M^{2n+1},\phi,\xi,\eta,g)$ be an almost $\alpha$-cosymplectic manifold. Suppose that $M$ admits two Einstein-Weyl structures with $\pm\theta$.
If the Ricci tensor is commuting, i.e. $\phi Q=Q\phi$, then $M$ is either an Einstein manifold, or an $\alpha$-cosymplectic manifold.
\end{theorem}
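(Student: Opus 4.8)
The plan is to carry out the same case split $B\in\mathbb{R}\xi$ versus $B\in\mathcal{D}$ as in the proof of Theorem \ref{T4.2}, but to let the commuting condition $\phi Q=Q\phi$ play the role of the sign hypothesis on $\kappa$. First I would evaluate commutativity at $\xi$: since $\phi\xi=0$, we get $\phi Q\xi=Q\phi\xi=0$, so $Q\xi\in\mathbb{R}\xi$, and by \eqref{2.8} this gives $Q\xi=(-2n\alpha^2-\mathrm{trace}(h^2))\xi$, hence $Ric(X,\xi)=(-2n\alpha^2-\mathrm{trace}(h^2))\eta(X)$. Feeding this into \eqref{3.7} with $Y=\xi$ and restricting to $X\in\mathcal{D}$ yields $(2n-1)\theta(X)\theta(\xi)=0$ for all $X\in\mathcal{D}$, which forces either $\theta(\xi)=\eta(B)=0$, i.e. $B\in\mathcal{D}$, or $\theta$ vanishes on $\mathcal{D}$, i.e. $B\in\mathbb{R}\xi$, just as at the start of the proof of Theorem \ref{T4.2}.

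For the case $B\in\mathcal{D}$ I would rewrite \eqref{3.7} as an operator identity: setting $a=\frac{r+(2n-1)|\theta|^2}{2n+1}$, it reads $QX=aX-(2n-1)\theta(X)B$. Applying $\phi Q=Q\phi$ to this expression gives the purely algebraic relation $\theta(X)\phi B=\theta(\phi X)B$ for every $X$. Choosing $X=B$ and using $g(B,\phi B)=0$ (skew-symmetry of $\phi$) gives $|B|^2\phi B=0$; if $B\neq0$ this forces $\phi B=0$, i.e. $B\in\mathbb{R}\xi$, contradicting $B\in\mathcal{D}$. Thus $B=0$, $\theta=0$, and \eqref{3.7} collapses to $Ric=\frac{r}{2n+1}g$, so $M$ is Einstein. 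I expect this to be the crux of the argument: the entire force of the commuting hypothesis is spent here, to convert $\phi Q=Q\phi$ into $\theta(X)\phi B=\theta(\phi X)B$ and thereby rule out a nonzero Lee field transverse to $\xi$ — precisely the step that the condition $\kappa<0$ handled in Theorem \ref{T4.2}.

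For the case $B=f\xi$, i.e. $\theta=f\eta$, I would expand $(\nabla_X\theta)Y+(\nabla_Y\theta)X$ via \eqref{2.5}, using that $\phi^2$ and $\phi h$ are symmetric (the latter from \eqref{2.2*}), so that \eqref{3.6**} becomes
\begin{equation*}
X(f)\eta(Y)+Y(f)\eta(X)-2\alpha f g(\phi^2X,Y)-2fg(\phi hX,Y)+\frac{2}{2n+1}\delta\theta\, g(X,Y)=0.
\end{equation*}
Substituting $X\mapsto hX$, $Y\mapsto\phi X$ and contracting over an orthonormal frame, the $\alpha$-term and the $\delta\theta$-term both acquire the factor $\sum_i g(he_i,\phi e_i)=-\mathrm{trace}(\phi h)=0$ by \eqref{2.9}, leaving $-2f\,\mathrm{trace}(h^2)=0$. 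Since $\mathrm{trace}(h^2)\geq0$ with equality iff $h=0$, either $f=0$, in which case $\theta=0$ and \eqref{3.7} again gives an Einstein metric, or $h=0$, in which case $M$ is $\alpha$-cosymplectic. Collecting the two cases completes the proof. The implication I would want to verify most carefully is this last one: unlike the $(\kappa,\mu)$ setting, where $h=0$ forces $\kappa=0$ through $h^2=\kappa\phi^2$ and hence normality, here one must argue directly from $h=0$ and the structure equation \eqref{2.4*} that the resulting structure is normal, i.e. genuinely $\alpha$-cosymplectic.
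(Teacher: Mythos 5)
Your proposal is correct and takes essentially the same route as the paper: the paper likewise applies $\phi Q=Q\phi$ to the operator form \eqref{5.18} of \eqref{3.7} to get $\theta(X)\phi B=\theta(\phi X)B$, sets $X=B$ to force $B=f\xi$, and then performs the identical $X\mapsto hX$, $Y\mapsto\phi X$ substitution and contraction in \eqref{3.6**} with \eqref{2.9} to conclude $f=0$ or $h=0$. Your preliminary dichotomy via $Q\xi$ and \eqref{2.8} is redundant (the algebraic step alone forces $B\in\mathbb{R}\xi$, which is how the paper avoids any case split), and the one step you flagged as needing care---that $h=0$ yields a genuinely $\alpha$-cosymplectic (normal) structure---is indeed asserted by the paper without justification as well.
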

\begin{proof}
By \eqref{3.7},  the Ricci operator is expressed as
\begin{equation}\label{5.18}
 QX=\Big(\frac{2n-1}{2n+1}|\theta|^2+\frac{r}{2n+1}\Big)X-(2n-1)\theta(X)B.
\end{equation}
Since the Ricci tensor is commuting,
\begin{equation*}
  \theta(X)\phi B=\theta(\phi X)B.
\end{equation*}
Taking $X=B$ gives $\phi B=0$ or $B=0$. Thus we know that $B=f\xi$, where $f=\theta(\xi).$ In terms of \eqref{3.6**}, we get
\begin{equation*}
X(f)\eta(Y)-2f\alpha g(\phi^2X,Y)-2fg(\phi hX,Y)+Y(f)\eta(X)+\frac{2}{2n+1}\delta\theta g(X,Y)=0.
\end{equation*}
As the proof of Theorem \ref{T4.2}, replacing $X$ by $hX$ and $Y$ by $\phi X$, contracting $X$ over the resulting equation and using \eqref{2.9}, we obtain $f=0$ or $h=0.$ Therefore we complete the proof by \eqref{5.18}.
\end{proof}
\begin{corollary}
Let $(M^{2n+1},\phi,\xi,\eta,g)$ be an almost $\alpha$-cosymplectic manifold. Suppose that $M$ admits two Einstein-Weyl structures with $\pm\theta=\pm f\eta$ for some function $f$, then $M$ either is an Einstein manifold, or an $\alpha$-cosymplectic manifold.
\end{corollary}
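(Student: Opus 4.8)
The plan is to recognize this corollary as a direct specialization of the preceding theorem. I claim that the hypothesis $\theta = f\eta$ already forces the Ricci tensor to be commuting, so that no assumption beyond that theorem is actually needed. Since $\theta = f\eta$ means precisely that its $g$-dual satisfies $B = f\xi$, the whole content of the corollary reduces to checking that $\phi Q = Q\phi$ holds automatically in this situation, after which the theorem applies verbatim.

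To verify this I would substitute $B = f\xi$ into the expression \eqref{5.18} for the Ricci operator, which is valid because $M$ admits two Einstein-Weyl structures with $\pm\theta$. Using $|\theta|^2 = f^2$ (as $\eta$ is the unit form dual to $\xi$) together with $\theta(X) = f\eta(X)$, the formula collapses to
\begin{equation*}
QX = \Big(\tfrac{2n-1}{2n+1}f^2 + \tfrac{r}{2n+1}\Big)X - (2n-1)f^2\eta(X)\xi,
\end{equation*}
that is, $QX = cX - (2n-1)f^2\eta(X)\xi$ for the scalar function $c$ appearing in parentheses. Applying $\phi$ and using $\phi\xi = 0$ gives $\phi QX = c\,\phi X$, while replacing $X$ by $\phi X$ and using $\eta\circ\phi = 0$ gives $Q\phi X = c\,\phi X$; hence $\phi Q = Q\phi$. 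With the commuting condition established, the preceding theorem immediately yields that $M$ is either Einstein or $\alpha$-cosymplectic.

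I expect no genuine obstacle here, since the only step beyond invoking the theorem is the one-line verification above: the special $\eta$-proportional form of $\theta$ is exactly what guarantees that $Q$ has the $\eta$-Einstein shape that commutes with $\phi$. An alternative, self-contained route would be to bypass the theorem and feed $B = f\xi$ directly into \eqref{3.6**}, using \eqref{2.5} to expand $(\nabla_X\theta)Y$, then replacing $X$ by $hX$ and $Y$ by $\phi X$, contracting over $X$, and invoking \eqref{2.9} to force $f = 0$ or $h = 0$; the former makes $M$ Einstein and the latter makes it $\alpha$-cosymplectic. This second route merely reproduces the computation already carried out in the theorem, which is why the cleanest presentation is to check the commuting identity and then cite the theorem.
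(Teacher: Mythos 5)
Your proposal is correct. Note that the paper offers no written proof of this corollary: it is stated immediately after the theorem, the intended justification being that the commuting hypothesis in the theorem's proof is used \emph{only} to conclude $\phi B=0$ or $B=0$, i.e. $B=f\xi$; once $\theta=f\eta$ is assumed outright, the second half of that proof (substituting $B=f\xi$ into \eqref{3.6**}, replacing $X$ by $hX$ and $Y$ by $\phi X$, contracting over $X$ and using \eqref{2.9}) yields $f=0$ or $h=0$, hence Einstein or $\alpha$-cosymplectic. Your primary route is logically a bit different and arguably cleaner: rather than re-entering the theorem's proof, you check that the corollary's hypothesis \emph{implies} the theorem's hypothesis, by substituting $B=f\xi$ into \eqref{5.18} (which is available since two Einstein--Weyl structures are assumed) to get $QX=cX-(2n-1)f^2\eta(X)\xi$, and observing that this commutes with $\phi$ because $\phi\xi=0$ and $\eta\circ\phi=0$; that $c$ is merely a smooth function is irrelevant for commutation, so the verification is sound. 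What your reduction buys is that the corollary becomes a formal consequence of the theorem's \emph{statement}, usable as a black box; what the paper's implicit route buys is that it skips even this one-line check at the cost of reusing the proof. Your stated alternative self-contained route coincides with the paper's intended argument, so nothing is missing.
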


\section{ Einstein-Weyl structures on $K$-cosymplectic manifolds}
 Let $M$ be a $(2n+1)$-dimensional almost cosymplectic manifold defined in Section 2, namely the 1-form $\eta$ and the fundamental form $\omega$ are closed  and satisfy $\eta\wedge\omega^n\neq0$ at every point of $M$.

\begin{definition}[\cite{BG}]
An almost cosymplectic manifold $(M,\phi,\xi,\eta,g)$ is called a \emph{$K$-cosymplectic manifold} if the Reeb vector field $\xi$ is Killing.
\end{definition}

For a $K$-cosymplectic manifold $(M,\phi,\xi,\eta,g)$, by Theorem 3.11 in \cite{MNY} we know
\begin{equation*}
  \nabla\xi=\nabla\eta=0.
\end{equation*}
Moreover, it follows from  Theorem 3.29 in \cite{MNY} that
\begin{equation}\label{6.1}
 R(X,Y)\xi=0\quad\text{for all}\;X,Y\in\mathfrak{X}(M).
\end{equation}
That shows that $Q\xi=0$.

In the following we suppose that $M$ admits a closed Einstein-Weyl structure, hence either $B\in\mathbb{R}\xi$ or $B\in\mathcal{D}$ by Proposition \ref{P3}.

If $B\in\mathbb{R}\xi$, we set $\theta=f\eta$ for some smooth function $f$ on $M$.
Then the Ricci tensor \eqref{3.6*} becomes
\begin{align}\label{6.2}
  Ric(Y,X)=(2n-1)X(f)\eta(Y)-(2n-1)f^2\eta(X)\eta(Y)-\lambda g(Y,X).
\end{align}

Using \eqref{6.1}, the formula \eqref{5.3} yields
\begin{align*}
QX=&-\Big((2n-1)|\theta|^2+2\lambda-2(2n-1)f^2\Big)\{X-\eta(X)\xi\}\\
=&-\Big(2\lambda-(2n-1)f^2\Big)\{X-\eta(X)\xi\}.\nonumber
\end{align*}
Combining with \eqref{6.2}, we conclude that
\begin{equation*}
-\Big(\lambda-(2n-1)f^2\Big)X=(2n-1)X(f)\xi-2\lambda\eta(X)\xi.
\end{equation*}
Letting $X\in\mathcal{D}$ we find $X(f)=0$ and $\lambda=(2n-1)f^2$.
From this we see that $Df=\xi(f)\xi$ with $\xi(f)=\frac{2\lambda}{2n-1}.$ On the other hand, since $\lambda=\delta\theta-(2n-1)|\theta|^2$,
$\delta\theta=2\lambda$. Because $\xi$ is Killing, $\delta\theta=\xi(f)$, which yields $\xi(f)=0$, and further $f=0$. That means that $M$ is Ricci-flat, thus
it is cosymplectic in terms of Corollary 3.35 in \cite{MNY}.

Next we assume $B\in\mathcal{D}$, then $\theta(\xi)=0$. From \eqref{5.3} and \eqref{6.1}, the Ricci operator $Q$ is expressed as
\begin{align}\label{6.4}
QX=2(2n-1)\theta(X)B-\Big((2n-1)|\theta|^2+2\lambda\Big)\{X-\eta(X)\xi\}.
\end{align}

Since $\xi$ is Killing and $Q\xi=0$, by \eqref{3.6*} with $Y=Z=\xi$, we see that $\lambda=0,$ i.e. $\delta\theta=(2n-1)|\theta|^2$.
On the other hand, via \eqref{6.4} and \eqref{3.9}, we have
\begin{equation*}
  \nabla_XB=3\theta(X)B-|\theta|^2\{X-\eta(X)\xi\}.
\end{equation*}
Contracting $X$ over the foregoing equation gives $\delta\theta=(3-2n)|\theta|^2$. Hence we get $\theta=0$.

Summing up the above discussion, we actually proved the following conclusion.
\begin{theorem}
Let $(M,\phi,\xi,\eta,g)$ be a compact $(2n+1)$-dimensional K-cosymplectic manifold. Suppose that $M$ admits a closed Einstein-Weyl structure.
Then $M$ is cosymplectic.
\end{theorem}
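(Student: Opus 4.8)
The plan is to exploit the strong rigidity of the $K$-cosymplectic structure. Recall that here $\nabla\xi=\nabla\eta=0$, that \eqref{6.1} gives $R(X,Y)\xi=0$ and hence $Q\xi=0$, and that a Killing field is divergence-free, so on the compact $M$ one has $\int_M\delta\theta\,dV=0$ for every $1$-form $\theta$. By Proposition \ref{P3} a closed Einstein-Weyl structure forces $B\in\mathbb{R}\xi$ or $B\in\mathcal{D}$, and I would treat the two cases separately, aiming in each to prove $\theta=0$ and then to invoke the classification of compact Ricci-flat almost cosymplectic manifolds.

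Suppose first $B\in\mathbb{R}\xi$, say $\theta=f\eta$. Since $M$ is compact and carries a closed Einstein-Weyl structure, the Weyl Ricci tensor vanishes, so \eqref{3.6*} and \eqref{5.3} are available. Substituting $\theta=f\eta$ together with $R(X,\xi)\xi=0$ into \eqref{5.3} gives an expression for the Ricci operator $Q$, which I would compare against the one coming from \eqref{3.6*}. Splitting the resulting identity into its $\mathcal{D}$-component and its $\xi$-component (by evaluating on $X\in\mathcal{D}$ and on $X=\xi$) should yield scalar relations tying $\lambda$, $\xi(f)$ and $f^2$ together, from which I expect a pointwise identity of the form $\delta\theta=2(2n-1)f^{2}\geq 0$. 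Integrating this over the compact $M$ kills the left-hand side and forces $f=0$; equivalently, the Killing relation $\delta\theta=\xi(f)$ yields the same conclusion. Hence $Q=0$.

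In the case $B\in\mathcal{D}$ one has $\theta(\xi)=0$. Evaluating \eqref{3.6*} at $Y=Z=\xi$ and using $Q\xi=0$ together with the Killing property should give $\lambda=0$, i.e. $\delta\theta=(2n-1)|\theta|^{2}$. Feeding $\lambda=0$ and the curvature relation into \eqref{3.9} produces an explicit formula for $\nabla_XB$; taking its trace gives a second evaluation $\delta\theta=(3-2n)|\theta|^{2}$. Reconciling the two expressions, or more robustly integrating the sign-definite relation over the compact manifold, forces $|\theta|^{2}\equiv 0$, so again $\theta=0$ and $Q=0$.

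In either case $M$ is Ricci-flat, and I would finish by invoking Corollary 3.35 of \cite{MNY}, which asserts that a compact Ricci-flat almost cosymplectic manifold is cosymplectic. The main obstacle is the bookkeeping in the two cases: the delicate point is that plugging the $K$-cosymplectic identities into the general compact closed Einstein-Weyl formulas produces two a priori distinct expressions for $\delta\theta$ (one obtained by contraction, the other from the divergence-free Reeb field), and these reconcile only when $\theta$ vanishes, with compactness supplying the sign-definite integration that makes the argument uniform in $n$.
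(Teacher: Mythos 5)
Your proposal is correct, and its skeleton is the same as the paper's: the case split $B\in\mathbb{R}\xi$ versus $B\in\mathcal{D}$ from Proposition \ref{P3}, Gauduchon's vanishing of $Ric^D$ on a compact manifold with a closed Einstein--Weyl structure (making \eqref{3.6*}, \eqref{3.9} and \eqref{5.3} available), the comparison of the two resulting expressions for $Q$ using $R(X,Y)\xi=0$ and $Q\xi=0$, and the final appeal to Corollary 3.35 of \cite{MNY}. Where you genuinely differ is the closing step of each case. The paper finishes pointwise: in the first case it plays $\delta\theta=\xi(f)$ against $\xi(f)=\tfrac{2\lambda}{2n-1}$ and $\delta\theta=2\lambda$; in the second it reconciles $\delta\theta=(2n-1)|\theta|^{2}$ (from $\lambda=0$) with the trace $\delta\theta=(3-2n)|\theta|^{2}$ of the formula for $\nabla_XB$. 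You instead integrate the sign-definite identities $\delta\theta=2(2n-1)f^{2}\geq 0$, respectively $\delta\theta=(2n-1)|\theta|^{2}\geq 0$, over the compact manifold and use $\int_M\delta\theta\,dV=0$. This buys something real: the paper's pointwise cancellations are sensitive to the sign convention for $\delta$ and degenerate at $n=1$, where $(2n-1)|\theta|^{2}$ and $(3-2n)|\theta|^{2}$ coincide, whereas your integration argument is uniform in $n$, insensitive to the convention, and in the second case renders the extra divergence computation unnecessary. One small correction: $\int_M\delta\theta\,dV=0$ for every $1$-form on a compact manifold is just the divergence theorem (adjointness of $\delta$ and $d$ against the constant function $1$) and has nothing to do with $\xi$ being Killing; the Killing/parallel property of $\xi$ enters only in deriving the pointwise identities themselves.
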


For a $K$-cosymplectic manifold with two Einstein-Weyl structures with $\pm\theta$, we also have the following conclusion.
\begin{theorem}
Let $(M,\phi,\xi,\eta,g)$ be a $(2n+1)$-dimensional, $2n+1\geq3$,
K-cosymplectic manifold. Suppose that $M$ admits two Einstein-Weyl structures with $\pm\theta$.
Then either $M$ is Ricci-flat, or the scalar curvature is non-positive and invariant along the Reeb vector field $\xi$.
\end{theorem}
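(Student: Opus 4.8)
The plan is to read off the Ricci operator from the two--Einstein--Weyl identity \eqref{3.7} (equivalently \eqref{5.18}) and to play it against the vanishing $Q\xi=0$, which holds on every $K$--cosymplectic manifold by \eqref{6.1}. Thus I would start from
\[
QX=c\,X-(2n-1)\theta(X)B,\qquad c=\frac{r+(2n-1)|\theta|^2}{2n+1},
\]
set $f:=\theta(\xi)=\eta(B)$, and evaluate at $\xi$. Pairing $Q\xi=0$ with $\xi$ and using $g(Q\xi,\xi)=Ric(\xi,\xi)=0$ gives $c=(2n-1)f^2$; substituting this back yields $(2n-1)f(f\xi-B)=0$. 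Since $2n-1\neq0$, either $f=0$ (so $B\in\mathcal{D}$) or $B=f\xi$. I would then handle the two alternatives separately.

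\textbf{Case $B\in\mathcal{D}$.} Here $f=0$ forces $c=0$, hence $r=-(2n-1)|\theta|^2\le 0$, and in particular $M$ is Ricci--flat when $\theta=0$. To obtain $\xi$--invariance of $r$ I would use \eqref{3.6**} together with $\nabla\xi=0$. Since $\theta(\xi)\equiv0$, the term $(\nabla_X\theta)(\xi)=X(\theta(\xi))-\theta(\nabla_X\xi)$ vanishes identically; putting $Y=\xi$ in \eqref{3.6**} and specialising to $X=\xi$ forces $\delta\theta=0$, and the remaining instances give $(\nabla_\xi\theta)X=0$ for all $X$, i.e. $\nabla_\xi B=0$. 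Consequently $\xi(|\theta|^2)=2g(\nabla_\xi B,B)=0$, so $\xi(r)=0$. This is exactly the ``scalar curvature non--positive and invariant along $\xi$'' alternative.

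\textbf{Case $B=f\xi$, i.e. $\theta=f\eta$.} Because $\nabla\eta=0$ one has $(\nabla_X\theta)Y=X(f)\eta(Y)$. Putting $Y=\xi$ in \eqref{3.6**} and taking $X\in\mathcal{D}$ gives $X(f)=0$, while $X=Y=\xi$ relates $\delta\theta$ to $\xi(f)$; comparing this with the value of $\delta\theta$ furnished by the Killing condition on $\xi$ (exactly as in the preceding closed Einstein--Weyl theorem) forces $\xi(f)=0$. Hence $f$ is constant and $r=2n(2n-1)f^2$ is automatically $\xi$--invariant. It then remains to show that this constant $f$ in fact vanishes, whence $Q=0$ and $M$ is Ricci--flat.

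The hard part will be this last step of the second case, namely passing from ``$f$ constant'' to ``$f=0$''. The equations \eqref{3.6**} and \eqref{3.7}, combined with $\nabla\xi=0$ and \eqref{6.1}, leave $r=2n(2n-1)f^2\ge0$ a priori admissible, so excluding a nonzero constant $f$ (equivalently, ruling out the $\eta$--Einstein, positive--scalar--curvature behaviour) is where the genuine work lies; I expect to close it by feeding the constancy of $f$ and the Killing identity $\delta\theta=\xi(f)$ back into the curvature relations, mirroring the argument used for the closed Einstein--Weyl $K$--cosymplectic case. The one bookkeeping subtlety to watch throughout is the sign convention for the codifferential $\delta\theta$ extracted from the Killing condition, since it enters both cases.
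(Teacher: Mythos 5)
Your reduction to the dichotomy $B\in\mathbb{R}\xi$ or $B\in\mathcal{D}$, and your treatment of the alternative $B\in\mathcal{D}$, are correct and essentially the paper's own argument: the paper likewise evaluates \eqref{3.7} at $Y=\xi$ using $Q\xi=0$ (its \eqref{6.6}), splits into the two cases, obtains $r=-(2n-1)|\theta|^2\le0$ when $\theta(\xi)=0$, and deduces $\xi(r)=0$ by putting $X=\xi$, $Y=B$ in \eqref{3.6**} to get $(\nabla_\xi\theta)B=0$; your variant, which extracts $\delta\theta=0$ and the stronger identity $\nabla_\xi B=0$ from \eqref{3.6**} with $Y=\xi$, is equally valid.

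The step you leave open in the case $B=f\xi$ --- passing from ``$f$ is constant'' to ``$f=0$'' --- is a genuine gap, and you were right to suspect that this is where the difficulty sits: in fact it cannot be closed, because the implication is false. At the corresponding point the paper simply asserts that the relation
\begin{equation*}
X(f)\eta(Y)+Y(f)\eta(X)+\frac{2\xi(f)}{2n+1}\,g(X,Y)=0
\end{equation*}
``implies $f=0$'', but once $\xi(f)=0$ this relation collapses to $X(f)\eta(Y)+Y(f)\eta(X)=0$, which yields only $df=0$, exactly as you found. A nonzero constant $f$ genuinely occurs: let $N^{2n}$ be a K\"ahler--Einstein manifold with $Ric_N=(2n-1)f_0^2\,g_N$ for a constant $f_0\neq0$ (a rescaled complex projective space), and let $M=N\times S^1$ carry the product coK\"ahler structure. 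Then $M$ is $K$-cosymplectic with $\nabla\xi=0$ and $Ric=(2n-1)f_0^2(g-\eta\otimes\eta)$, so $Q\xi=0$; taking $\theta=\pm f_0\eta$, every covariant-derivative term in \eqref{2.3} vanishes (since $f_0$ is constant and $\nabla\eta=0$) and direct substitution gives $Ric^D\equiv0$, so both $\pm f_0\eta$ are Einstein--Weyl (indeed Ricci-flat Weyl) structures; this is precisely the family of Matzeu examples quoted in the paper's introduction. Yet $M$ is not Ricci-flat and $r=2n(2n-1)f_0^2>0$. Your computation $r=2n(2n-1)f^2\ge0$ is the correct one (the right-hand side of the paper's \eqref{6.6} should carry a minus sign, a slip immaterial to the dichotomy but relevant here), and it is the end of the road: in this branch the strongest available conclusion is that $M$ is $\eta$-Einstein with constant non-negative scalar curvature. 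So your proposal is incomplete at exactly the point where the paper's own proof is invalid, and the theorem's stated dichotomy actually fails in the $B\in\mathbb{R}\xi$ alternative.
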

\begin{proof}
Since $Q\xi=0$, it follows from \eqref{3.7} that
\begin{align}\label{6.6}
&\Big[-\frac{r}{2n+1}-\frac{2n-1}{2n+1}|\theta|^2\Big]\eta(X)=(2n-1)\theta(X)\theta(\xi).
\end{align}
By taking $X\in\mathcal{D}$, we see that $B\in\mathbb{R}\xi$ or $B\in\mathcal{D}$. When $B\in\mathbb{R}\xi$, as before we set $B=f\xi$, then $\delta\theta=\xi(f)$, so it follows from \eqref{3.6**}
\begin{equation*}
X(f)\eta(Y)+Y(f)\eta(X)+\frac{2\xi(f)}{2n+1} g(X,Y)=0.
\end{equation*}
Replacing $X$ and $Y$ by $\phi X$, we get $\xi(f)=0.$ Further the above relation implies $f=0$. Therefore the equation \eqref{6.6} yields $r=0$ and $M$ is Ricci-flat from \eqref{3.7}.

If $B\in\mathcal{D}$, \eqref{6.6} implies that $r=-(2n-1)|\theta|^2$, and further $QX=-(2n-1)\theta(X)B$ by \eqref{3.7}. So $QB=rB$.
Since $B\in\mathcal{D}$ and $\xi$ is Killing,  taking $X=\xi$ and $Y=B$ in \eqref{3.6**} yields $(\nabla_\xi\theta)B=0$.
Thus we find $\xi(r)=-2(2n-1)(\nabla_\xi\theta)B=0$.
\end{proof}

Since any compact Ricci-flat almost cosymplectic manifold is cosymplectic (see \cite[Corollary 3.35]{MNY}), we conclude immediately from the previous theorem the following corollary.
\begin{corollary}
A compact $K$-cosymplectic manifold admitting two Einstein-Weyl structures with $\pm\theta=\pm f\eta$ for some function $f$ is cosymplectic.
\end{corollary}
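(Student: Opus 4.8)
The plan is to reduce the corollary to the theorem immediately preceding it. The hypothesis $\pm\theta=\pm f\eta$ says precisely that the metric dual of $\theta$ is $B=f\xi$, so that $B\in\mathbb{R}\xi$; this places us in the first of the two cases treated in the proof of that theorem. First I would recall that for a $K$-cosymplectic manifold $\xi$ is Killing, whence $\delta\theta=\xi(f)$, and substitute $B=f\xi$ into \eqref{3.6**}. Replacing both $X$ and $Y$ by $\phi X$ there collapses the $\eta$-terms (since $\eta\circ\phi=0$) and leaves only a multiple of $\xi(f)$, forcing $\xi(f)=0$; the remaining content of \eqref{3.6**} then gives $f\equiv 0$. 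Hence $\theta=0$, and \eqref{3.7} shows that $M$ is Ricci-flat. This is exactly the Ricci-flat alternative of the preceding theorem, the non-positive scalar curvature alternative being ruled out by the special form $\theta=f\eta$.

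Once $M$ is known to be Ricci-flat, I would invoke compactness: a $K$-cosymplectic manifold is in particular almost cosymplectic, and by \cite[Corollary 3.35]{MNY} every compact Ricci-flat almost cosymplectic manifold is cosymplectic. This yields the conclusion.

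Since the corollary is essentially a specialization of the previous theorem, I do not expect a serious obstacle. The only point requiring genuine care is confirming that $\theta=f\eta$ really lands us in the $B\in\mathbb{R}\xi$ branch rather than the $B\in\mathcal{D}$ branch of that theorem; but even the latter branch would impose $\theta(\xi)=0$, i.e. $f\,\eta(\xi)=f=0$, so the argument is robust either way. Compactness enters only at the very last step, to pass from Ricci-flatness to the cosymplectic property through the cited result of \cite{MNY}.
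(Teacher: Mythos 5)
You follow exactly the paper's route: the paper obtains this corollary by observing that $\pm\theta=\pm f\eta$ places it in the $B\in\mathbb{R}\xi$ branch of the preceding theorem, whose conclusion there is Ricci-flatness, and then quoting \cite[Corollary 3.35]{MNY}; you simply inline that branch of the theorem's proof. Unfortunately, your write-up and the paper's argument share the same genuine gap, at the step you phrase as ``the remaining content of \eqref{3.6**} then gives $f\equiv 0$'' (the paper's wording in the theorem is ``Further the above relation implies $f=0$''). After $\xi(f)=0$, equation \eqref{3.6**} reduces to $X(f)\eta(Y)+Y(f)\eta(X)=0$, and putting $Y=\xi$ gives $X(f)=0$ for all $X$; this says only that $f$ is \emph{constant}, not that $f=0$. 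Nothing in \eqref{3.6**} or \eqref{3.7} excludes a nonzero constant: in that case $\theta=f\eta$ is parallel (since $\nabla\eta=0$ on a $K$-cosymplectic manifold), \eqref{3.6**} holds identically, and \eqref{3.7} together with $Q\xi=0$ admits the perfectly consistent, non-Ricci-flat solution
\begin{equation*}
Ric=(2n-1)f^{2}\bigl(g-\eta\otimes\eta\bigr),\qquad r=2n(2n-1)f^{2}.
\end{equation*}

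This case genuinely occurs: take a compact K\"ahler--Einstein manifold $N$ with $Ric_{N}=(2n-1)f^{2}g_{N}$ (a rescaled complex projective space) and let $M=N\times S^{1}$ carry the product cosymplectic structure. Then $M$ is compact and $K$-cosymplectic, and substituting the parallel form $\theta=f\eta$ into \eqref{2.3} gives $Ric^{D}=0$ for both $\pm\theta$, so these are two Einstein-Weyl structures of exactly the hypothesized form --- this is Matzeu's family of examples recalled in the paper's Introduction. Such an $M$ is $\eta$-Einstein with $r>0$ rather than Ricci-flat, so the deduction ``$f=0$'', and with it the appeal to \cite[Corollary 3.35]{MNY}, cannot be repaired as written (the example also conflicts with the dichotomy asserted in the preceding theorem). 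The corollary's statement itself survives --- $N\times S^{1}$ \emph{is} cosymplectic --- but a correct proof must additionally handle the case of nonzero constant $f$, i.e.\ prove that a compact $K$-cosymplectic manifold with $Ric=c\,(g-\eta\otimes\eta)$, $c>0$ constant, is cosymplectic; that is a transverse Goldberg--Sekigawa-type assertion which neither your argument nor the paper supplies.
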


\section*{Acknowledgement}
The author would like to thank the referee for the comments and valuable suggestions.

\end{document}